\newtheorem{theorem}{Theorem}
\newtheorem{definition}{Definition}
\newtheorem{remark}{Remark}
\begin{document}

\title{Minimal 3-triangulations of $p$-toroids}

\author{Milica Stojanovi\' c \\Faculty of Organizational Sciences, 154 Jove Ili\' ca Street\\ University of Belgrade\\11040 Belgrade, Serbia \\milica.stojanovic@fon.bg.ac.rs\\}
\date{}

\maketitle

\pagestyle{myheadings} \markboth{\rm M. Stojanovi\'{c}}{\rm Minimal 3-triangulations of $p$-toroids}

\footnote[0]{2000 {\it Mathematics Subject Classifications}.
52C17, 52B05, 05C62.} \footnote[0]{{\it Keywords and Phrases}: triangulation of polyhedra, toroids, piecewise convex polyhedra.}

\begin{abstract}
It is known that we can always 3-triangulate (i.e. divide into tetrahedra) convex polyhedra but not always non-convex ones. Polyhedra topologically equivalent to sphere with $p$ handles, shortly $p$-toroids, could not be convex. So, it is interesting to investigate possibilities and properties of their 3-triangulations. Here, we study the minimal necessary number of tetrahedra for the triangulation of a 3-triangulable $p$-toroid. For that purpose, we developed the concepts of piecewise convex polyhedra and graphs of connection.
\end{abstract}




\section{Introduction}
\label{sec:1}

Dividing polygon by diagonals into triangles is called triangulation. It is known that we can triangulate each polygon with $n$ vertices by $n-3$ diagonals into $n-2$ triangles.

Generalization of this process to higher dimensions is also called triangulation. It consists of dividing polyhedra (polytop) into tetrahedra (simplices) using only original vertices. There are two kinds of problems with triangulation in higher dimensions. It is proved that there is no possibility to triangulate some of non-convex polyhedra \cite{RS}, \cite{Sch} in three-dimensional space, and it is also proved that different triangulations of the same polyhedron may have different numbers of tetrahedra \cite{EPW}, \cite{STT}, \cite{S05}, \cite{S08}. Considering the smallest and the largest number of tetrahedra in triangulation (the minimal and the maximal triangulation), the authors obtained values, which linearly, resp. squarely, depend on the number of vertices. Interesting triangulations are described in the papers of Edelsbrunner, Preparata, West \cite{EPW} and Sleator, Tarjan, Thurston \cite{STT}.

By the term "polyhedron" we usually mean a simple polyhedron, topologically equivalent to sphere. Though there are classes of polyhedra topologically equivalent to torus or $p$-torus (sphere with $p$ handles).

By the definition of Szilassi \cite{Sz-1}, torus-like polyhedra are called toroids. Generalizing that definition, we shall use the term $p$-toroids ($p \in N$ is a given natural number) for $p$-torus-like polyhedra, and term toroids as a common name for all $p$-toroids (the Szilassi's toroids would be called 1-toroids). Since toroids are not convex, it is questionable if it is possible to 3-triangulate them. The 1-toroid with the smallest number of vertices is Cs\' {a}sz\' {a}r polyhedron \cite{BE}, \cite{B-1}, \cite{Cs}, \cite{Sz}, \cite{Sz-1}.
It has 7 vertices and is known to be triangulable with 7 tetrahedra. It is obtained as an example of polyhedron without diagonals \cite{Cs}, \cite{SzS1}, \cite{SzS2}. Some other examples of 1-toroids are given in \cite{Sz}, \cite{Sz-1}, while in \cite{S15}, \cite{S17} 3-triangulations of 1-toroids and 2-toroids are discussed. In \cite{Br}, \cite{JR} some combinatorial properties of $p$-toroids are given.

In Section 2, are described some characteristic polyhedra, while in Section 3 we give some necessary definitions and properties of 3-triangulation of 1-toroids and 2-toroids. In Section 4, we prove that if it is possible to 3-triangulate $p$-toroid with $n$ vertices, then the minimal number of tetrahedra necessary for its 3-triangulation is $T_{min} \geq n + 3(p-1)$. Also, we prove that for each $n \geq 3 + 4p$ there is 3-triangulable $p$-toroid with $T_{min} = n + 3(p-1)$ and discuss whether the same property holds for $p$-toroids with smaller number $n$ of vertices.



\section{Some characteristic examples of polyhedra and their 3-triangulation} \label{sec:2}


{\bf 2.1} Though it is possible to triangulate all convex polyhedra, but this is not the case with non-convex ones. Lennes \cite{L} was the first who present a polyhedron whose interior cannot be triangulated without new vertices. The more famous example, however, was given by Sch\" {o}nhardt \cite{Sch} and referred to in \cite{RS}. Sch\" {o}nhardt's polyhedron is obtained in the following way: triangulate the lateral faces of a trigonal prism $A_1B_1C_1A_2B_2C_2$ by the diagonals $A_1B_2$, $B_1C_2$ and $C_1A_2$. Then "twist" the top face $A_2B_2C_2$ by a small amount in the positive direction. In such a polyhedron, none of tetrahedra with vertices in the set $\{ A_1, B_1, C_1, A_2, B_2, C_2 \}$ is inner, so the triangulation is not possible.


\vspace{0.3cm}
\noindent {\bf 2.2} It is proved that the smallest possible number of tetrahedra in the triangulation of a polyhedron with $n$ vertices is $n - 3$.  An example of polyhedron triangulable with $n - 3$ tetrahedra is a pyramid with $n - 1$ vertices in the basis (i.e., a total of $n$ vertices). We can triangulate it in the following way: we have to do any 2-triangulation of the basis into $(n - 1) - 2 = n - 3$ triangles. Each of these triangles makes with the apex one of tetrahedra in 3-triangulation. 

But, it is not possible to triangulate each polyhedron into $n - 3$ tetrahedra. We shall see later that all triangulations of an octahedron (6 vertices) give 4 tetrahedra.


\vspace{0.3cm}
\noindent {\bf 2.3} Let us now consider triangulations of a bipyramid with $n - 2$ vertices in the basis. The first method is to divide bipyramid into two pyramids and triangulate each of them, taking care of a common 2-triangulation of the basis, then we shall obtain $2(n-4)$ tetrahedra. In the second method, each of $n - 2$ tetrahedra has a common edge joining the apices of the bipyramid, and moreover, each of them contains a pair of the neighbouring vertices of the basis (i.e., one of the edges of the basis). 

If $n = 5$, such a bipyramid has a triangular basis. Then, the first method is  "better", i.e. it gives smaller number of tetrahedra. For $n = 6$ (the octahedron), both methods give 4 tetrahedra, and for $n \geq 7$, the second method is "better". In Figure \ref{fig:1}, triangulations of a bipyramid with a pentagonal basis (i.e. $n=7$) are given. Dividing a bipyramid into two pyramids leads to triangulation with 6 tetrahedra, and dividing it around the axis $V_1V_2$ gives triangulation with 5 tetrahedra.

\begin{figure}[htbp]
\centering
	\includegraphics[width=0.9\textwidth]{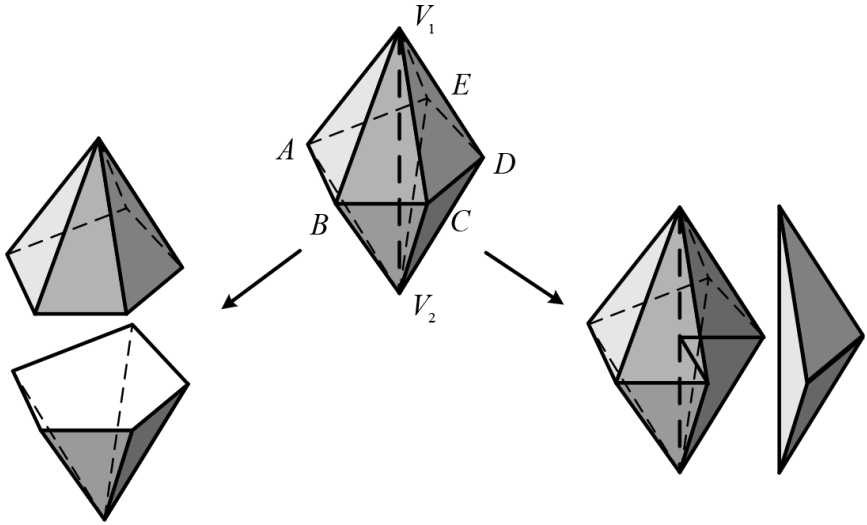}
	\caption{Triangulations of pentagonal bipyramid}
	\label{fig:1}
\end{figure}


\vspace{0.3cm}
\noindent {\bf 2.4} In \cite{Sz-1}, Szilassi introduced a term toroid. Here we shall use the term 1-toroid instead of a toroid.

\begin{definition} \label{d:Sz}(Szilassi) An ordinary polyhedron is called {\em 1-toroid} if it is topologically torus-like (i.e. it can be converted to a torus by continuous deformation) and its faces are simple polygons.
\end{definition}

A 1-toroid with the smallest number of vertices is the Cs\' {a}sz\' {a}r polyhedron (Figure \ref{fig:2}). It has 7 vertices and no diagonals, i.e. each vertex is connected to six others by edges. In \cite{BE} Bokowski and Eggert proved that Cs\' {a}sz\' {a}r polyhedron has four essentially different versions. It is to be noted that in topological terms various versions of  Cs\' {a}sz\' {a}r polyhedron are isomorphic -- there is only one way to draw the full graph with seven vertices on the torus. Szilassi in Wolfram Demonstrations Project \cite{Sz-2} shows that Cs\' {a}sz\' {a}r polyhedron is 3-triangulable with 7 tetrahedra.

\begin{figure}[htbp]
\centering
	\includegraphics[width=0.35\textwidth]{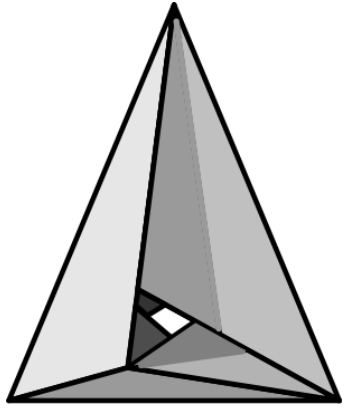}
	\caption{Cs\' {a}sz\' {a}r polyhedron}
	\label{fig:2}
\end{figure}



\section{Preliminaries} \label{sec:3}

In accordance with the definition \ref{d:Sz}, we introduce term $p$-toroid ($p \in N$)

\begin{definition} An ordinary polyhedron is called {\em $p$-toroid}, $p \in N$ is a given number, if it is topologically equivalent to sphere with $p$ handles ($p$-torus) and its faces are simple polygons.
\end{definition}

Let us use term {\em toroid} as a common name for  all $p$-toroids. In our consideration of 3-triangulability of toroids, we shall also use the following definition.

\begin{definition} Polyhedron is {\em piecewise convex} if it is possible to divide it into convex polyhedra $P_i$, $i=1, \ldots , m$, with disjunct interiors. A pair of polyhedra $P_i$, $P_j$ is said to be {\em neighbouring} if they have a common face called {\em contact face}. 
\end{definition}

If the polyhedra $P_i$ and  $P_j$ are not neighbouring, they may have a common edge $e$ or a common vertex $v$. That is possible iff there is a sequence of neighbouring polyhedra $P_i, P_{i+1}, \ldots , P_{i+k} \equiv P_j$ such that the edge $e$, or the vertex $v$ belongs to each contact face ${f_l}$ common to $P_l$ and $P_{l+1}$, $l \in \{ i, \ldots , i+k-1 \}$. Otherwise, polyhedra $P_i$ and  $P_j$ do not have common points.

\begin{remark} \label{r:1} Since it is always possible to 3-triangulate convex polyhedra, the same property holds for piecewise convex polyhedra, especially for piecewise convex toroids. 
\end{remark}

\begin{remark} \label{r:2} Each 3-triangulable polyhedron can be considered as a collection of connected tetrahedra, so such polyhedron is piecewise convex. 
\end{remark}

In Figure \ref{fig:3}, we give an example of 1-toroid $P_9^1$ with $n = 9$ vertices. It is composed of three pieces of convex polyhedra $A$ which are topologically equivalent to triangular prisms. Polyhedron $P_9^1$ is an example of cyclically piecewise convex 1-toroid defined bellow.

\begin{figure}[htbp]
\centering
	\includegraphics[width=0.45\textwidth]{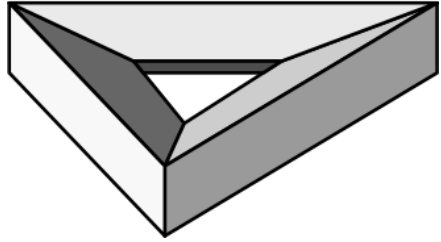}
	\caption{Cyclically piecewise convex polyhedron (1-toroid) $P_{9}^1$}
	\label{fig:3} 
\end{figure}

\begin{definition} 1-toroid is {\em cyclically piecewise convex} if it is possible to divide it into a cycle of convex polyhedra $P_i$, $i=1, \ldots , n$, such that $P_i$ and $P_{i+1}$, $i=1, \ldots , n-1$ and $P_n$ and  $P_1$ are neighbours.
\end{definition}

If a polyhedron $P$ is piecewise convex, let us form a {\em graph of connection} of it, in such a way that {\em nodes} represent convex polyhedra $P_i$, $i=1, \ldots , m$, the pieces of $P$, while {\em edges} represent contact faces between them. 

It is obvious that if 1-toroid is cyclically piecewise convex, then its graph of connection is a single cycle. Other piecewise convex 1-toroids have graphs with a cycle and additional branches. 

Similarly, a piecewise convex $p$-toroid form a planar graph of connection with $p$ cycles, and eventually additional branches. E.g. 2-toroid $P_{14}^2$ given in Figure \ref{fig:4} have two cycles in both of its graphs in Figure \ref{fig:5}. This 2-toroid has $n = 14$ vertices and it is composed of six pieces of $A$ or of two 1-toroids $P_9^1$. It has two graphs of connection because the union of two $A$ parts in the middle form convex polyhedron marked with $2 \cdot A$ in the second of graphs.

This example shows us that division of polyhedron to convex pieces is {\bf not} necessarily unique.

\begin{figure}[htbp]
\centering
	\includegraphics[width=0.58\textwidth]{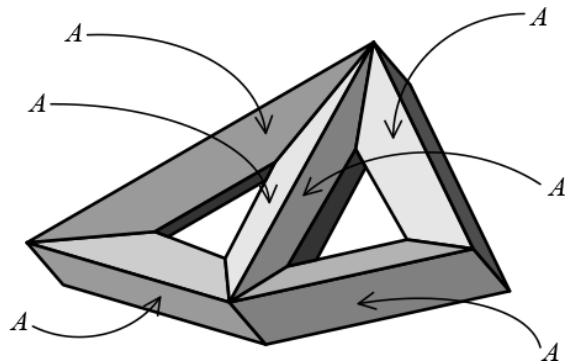}
	\caption{Piecewise convex 2-toroid $P_{14}^2$}
	\label{fig:4} 
\end{figure}

\begin{figure}[htbp]
\centering
	\includegraphics[width=0.68\textwidth]{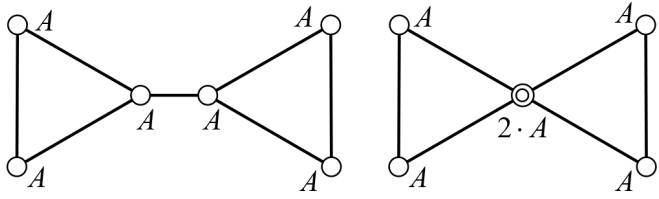}
	\caption{Two graphs of connection for the 2-toroid $P_{14}^2$}
	\label{fig:5} 
\end{figure}

Two different cycles in a graph of 2-toroid can have a common node, or to be connected by an edge or by a branch. The first two cases we observe on the graphs of connection of $P_{14}^2$ in Figure \ref{fig:5}. As in this example, if two cycles of graph for toroid $P$ have a common node, then corresponding cyclically piecewise convex pieces of $P$  share common convex piece, and if they are connected by an edge, they have a contact face. In the third case, two cyclically piecewise convex pieces of $P$ are connected by contact faces with a simple piecewise convex polyhedron inducing branch in the graph of connection. An example of such graph is given in Figure \ref{fig:7} describing 2-toroid $P_{20}^2$ (Figure \ref{fig:6}). $P_{20}^2$ has $n = 20$ vertices and it is composed of two 1-toroids with $n = 10$ vertices connected by polyhedron $A$. In both of the figures convex polyhedron with $n = 7$ vertices is marked with $B$.

\begin{figure}[htbp]
\centering
	\includegraphics[width=0.67\textwidth]{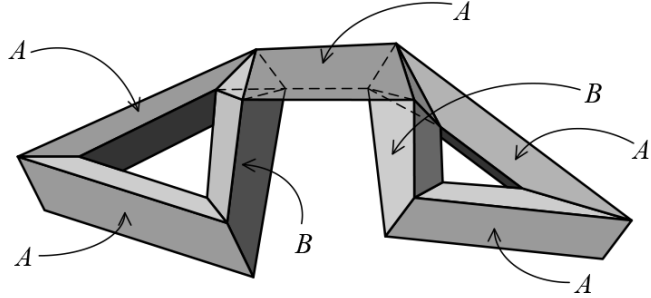}
	\caption{Piecewise convex 2-toroid $P_{20}^2$}
	\label{fig:6} 
\end{figure}

\begin{figure}[htbp]
\centering
	\includegraphics[width=0.55\textwidth]{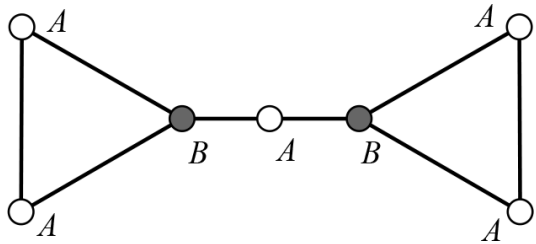}
	\caption{Graph of connection for the 2-toroid $P_{20}^2$}
	\label{fig:7} 
\end{figure}

\pagebreak
In this paper we consider the minimal necessary number of simplices in 3-triangulation of a considered toroid $P$. So, it would be useful to handle with divisions and graphs of toroids in which minimal 3-triangulation of $P$ is in accordance with minimal 3-triangulation of their pieces. Namely, if we do not care about this accordance, it may happen that the sum of tetrahedra in minimal 3-triangulations of pieces would be greater than the number of tetrahedra in 3-triangulation of whole toroid. Really, if contact face of two pieces is with $t \geq 5$ vertices, it may happen that we have around it the bipyramid $R$ with $t$ vertices in the basis. Then, each of two considered pieces would contain one of pyramids as a piece of the bipyramid $R$. As we have observed in 2.3 minimal 3-triangulation of $R$ gives smaller number of tetrahedra then the sum of separate 3-triangulations of pyramids belonging to the pieces. So, let us define:

\begin{definition} \label{d:m} {\em $m$-division} of a polyhedron is a division in which tetrahedra participating in minimal 3-triangulations of pieces are at the same time participating in minimal 3-triangulation of the whole polyhedron. A graph of connection of a given polyhedron is {\em $m$-graph} if it represents $m$-division of that polyhedron.
\end{definition}

\begin{remark} $m$-division and thus $m$-graph of a polyhedron are not unique. Note that convex pieces of division ($m$-division) can be either separate tetrahedra or their different collections. Beside that, there may be more possibilities for minimal 3-triangulation of the same polyhedron.

On the other hand, it is obvious that exists at least one $m$-division of a given 3-triangulable polyhedron. It is its division into tetrahedra participating in minimal 3-triangulation. 
\end{remark}

In \cite{S15} next theorems of 1-toroids are proved:

\begin{theorem} \label{th:3} If it is possible to 3-triangulate 1-toroid with $n \geq 7$ vertices, then the minimal number of tetrahedra necessary for that triangulation is $T_{min} \geq n$.
\end{theorem}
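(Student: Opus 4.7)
The plan is to combine Euler-characteristic arguments for the solid torus and for its bounding torus. Consider any 3-triangulation of the 1-toroid $P$ with $n$ vertices, $E$ edges, $F$ triangular $2$-faces, and $T$ tetrahedra; split $E = E_{\mathrm{bdry}} + E_{\mathrm{int}}$ and $F = F_{\mathrm{bdry}} + F_{\mathrm{int}}$ according to whether each simplex lies on $\partial P$.

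First I would pin down the boundary counts. Because a 3-triangulation uses no new vertices, all $n$ vertices lie on $\partial P$, which is a triangulated torus. Euler's formula for the torus gives $n - E_{\mathrm{bdry}} + F_{\mathrm{bdry}} = 0$, while the edge-in-triangle count on a closed surface gives $3 F_{\mathrm{bdry}} = 2 E_{\mathrm{bdry}}$. Solving these simultaneously forces $F_{\mathrm{bdry}} = 2n$ and $E_{\mathrm{bdry}} = 3n$.

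Next I would move to three dimensions. Each tetrahedron has $4$ triangular faces; each interior face lies in exactly two tetrahedra and each boundary face in exactly one, so $4T = 2 F_{\mathrm{int}} + F_{\mathrm{bdry}}$, whence $F = 2T + n$. Since $P$ is homeomorphic to a solid torus, which is homotopy-equivalent to $S^1$ and therefore has Euler characteristic $0$, its simplicial decomposition satisfies $n - E + F - T = 0$. Substituting $F = 2T + n$ collapses this identity to $T = E - 2n$, and combined with the trivial inequality $E \geq E_{\mathrm{bdry}} = 3n$ it yields $T \geq n$.

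The one point that needs explicit care is the manifold incidence structure driving the counts: each internal $2$-face must meet exactly two tetrahedra, and each boundary edge must lie in exactly two boundary triangles. Both properties hold because a 3-triangulation of a piecewise linear 3-manifold-with-boundary produces a simplicial manifold, and I would spell that out. Beyond this the argument is pure bookkeeping with Euler's formula, and the same scheme, substituting a surface of genus $p$ and a handlebody of Euler characteristic $1-p$, should deliver the general lower bound $T \geq n + 3(p-1)$ that is announced in the introduction.
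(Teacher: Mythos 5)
Your argument is essentially correct, but it takes a genuinely different route from the paper's. The paper does not prove Theorem \ref{th:3} here at all: it quotes it from \cite{S15}, and the general bound $T_{min}\geq n+3(p-1)$ (Theorem \ref{th:1}) is then obtained by induction on the number of handles, using $m$-divisions into convex pieces and graphs of connection, with Theorems \ref{th:3} and \ref{th:5} as base cases. Your proof replaces all of this by Euler-characteristic bookkeeping: the induced boundary triangulation forces $E_{\mathrm{bdry}}=3n$, the face--tetrahedron incidences force $F=2T+n$, and $\chi(P)=0$ then gives $T=E-2n\geq n$. Carried out for genus $p$ (where $\chi(\partial P)=2-2p$ gives $E_{\mathrm{bdry}}=3n-6+6p$ and $\chi(P)=1-p$), the same computation yields $T\geq n+3(p-1)$, so your method proves Theorem \ref{th:1} in one stroke, without the $m$-graph machinery, without the ``separating/duplicating'' step, and without appealing to the earlier papers. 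What the paper's approach buys instead is a purely combinatorial framework (graphs of connection) that is reused for the constructions in Theorem \ref{th:2}, and no input from manifold topology.

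Two points in your write-up need repair, though neither is fatal. First, a 1-toroid need not be homeomorphic to a solid torus: the solid bounded by a polyhedral torus can be a ball with a knotted hole, whose fundamental group is a knot group, so ``homotopy equivalent to $S^1$'' is not available in general. You do not need it: for any compact $3$-manifold $M$ with boundary, doubling gives $\chi(M)=\tfrac{1}{2}\chi(\partial M)$, hence $\chi(P)=0$ for torus boundary (and $\chi(P)=1-p$ in genus $p$), which is all your computation uses; the same correction applies to your closing remark about ``a handlebody.'' Second, every count you make ($3F_{\mathrm{bdry}}=2E_{\mathrm{bdry}}$, $4T=2F_{\mathrm{int}}+F_{\mathrm{bdry}}$, the relation $n-E+F-T=\chi(P)$, and the fact that every edge lying on $\partial P$ is an edge of the boundary triangulation) presupposes that the 3-triangulation is a simplicial complex, i.e.\ that tetrahedra meet face-to-face. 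That is the standard meaning of 3-triangulation in this literature and is consistent with the paper's usage, but since the paper never formalizes it you should state it explicitly as a hypothesis of your counting argument.
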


\begin{theorem} \label{th:4} For each $n \geq 7$, there is a 1-toroid which is possible to 3-tri\-an\-gu\-la\-te.
\end{theorem}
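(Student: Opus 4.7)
The plan is to prove this by induction on $n$, anchored at $n=7$ by the Cs\'asz\'ar polyhedron discussed in Section 2.4, which is a 1-toroid with $7$ vertices admitting a 3-triangulation into $7$ tetrahedra. This handles the base case immediately and provides the starting point for the induction.

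For the inductive step, assume a 3-triangulable 1-toroid $P$ with $n$ vertices has been constructed. I would produce a 3-triangulable 1-toroid $P'$ with $n+1$ vertices by ``capping'' a boundary face with a new tetrahedron. Explicitly, choose a tetrahedron $T=ABCD$ of the given 3-triangulation of $P$ such that the face $ABC$ lies on the outer boundary of $P$ (such a face exists because the tetrahedra of the triangulation tile $P$ and the boundary $\partial P$ is non-empty). Let $E$ be a new vertex placed in the exterior of $P$, close to the centroid of $ABC$ and along the outward normal of that face, and set $P' := P \cup \mathrm{conv}(A,B,C,E)$. Then $P'$ has exactly $n+1$ vertices; the original 3-triangulation of $P$ together with the tetrahedron $ABCE$ is a 3-triangulation of $P'$; and topologically $P'$ is obtained from $P$ by gluing a 3-ball along a triangular disk, which does not change the genus, so $P'$ is again a 1-toroid.

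The main thing to verify is geometric realizability: that $E$ can actually be positioned so $P'$ is a bona fide simple polyhedron with no self-intersections and so that the resulting faces $ABE$, $BCE$, $CAE$ are simple polygons not meeting the rest of $\partial P$. This is the only real obstacle, and it is a minor one: since $ABC$ is a boundary face of $P$, there is a well-defined outward half-space at its interior, so placing $E$ at sufficiently small positive distance along the outward normal keeps the new tetrahedron disjoint from $P \setminus \overline{ABC}$. A routine compactness/continuity argument pins down an admissible $\varepsilon>0$ for the displacement.

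Iterating this construction starting from the Cs\'asz\'ar polyhedron yields, for every $n\geq 7$, a 1-toroid with $n$ vertices that admits a 3-triangulation, which is exactly the statement of the theorem.
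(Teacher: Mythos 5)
Your construction is correct and is essentially the approach the paper takes (see the proof of Theorem~\ref{th:2} specialized to $p=1$): anchor at the Cs\'asz\'ar polyhedron, which is 3-triangulable with $7$ tetrahedra, and enlarge it without changing the genus by gluing triangulable material onto a triangular boundary face. The only cosmetic difference is that the paper attaches in one step a pyramid over a spatial $(n-5)$-gon, adding all $n-7$ new vertices at once, whereas you add one apex at a time by induction; both yield a 3-triangulable 1-toroid with $n$ vertices for every $n \geq 7$.
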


Corresponding theorems for 2-toroids are given in \cite{S17}.

\begin{theorem} \label{th:5} If it is possible to 3-triangulate 2-toroid with $n \geq 10$ vertices, then the minimal number of tetrahedra necessary for triangulation is $T_{min} \geq n+3$. 
\end{theorem}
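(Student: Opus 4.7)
The plan is to apply Euler's formula twice: once to the genus-$2$ surface bounding the toroid, and once to the $3$-dimensional triangulation of the solid toroid itself.

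Let $P$ be a $2$-toroid with $n$ vertices admitting a $3$-triangulation, and denote by $T$, $F$, $E$, $V$ the numbers of tetrahedra, triangles, edges, and vertices of that triangulation. Because a $3$-triangulation uses only original vertices, $V=n$ and every vertex lies on $\partial P$; split $F=F_i+F_b$ and $E=E_i+E_b$ into interior and boundary parts.

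First I would analyse the boundary. The surface $\partial P$ is a closed orientable surface of genus $2$, so its Euler characteristic is $-2$; since every boundary edge sits between exactly two boundary triangles, $2E_b=3F_b$. Combined with $V-E_b+F_b=-2$, these give
\[
E_b \;=\; 3n+6, \qquad F_b \;=\; 2n+4.
\]

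Next I would analyse the solid. The $2$-toroid, viewed as a $3$-manifold with boundary, is a genus-$2$ handlebody and hence has Euler characteristic $-1$, so $V-E+F-T=-1$. Each tetrahedron contributes $4$ triangular faces, each interior face is shared by two tetrahedra, and each boundary face belongs to exactly one tetrahedron; hence $4T=2F_i+F_b$ and therefore $F=2T+F_b/2=2T+n+2$. Substituting into the Euler relation collapses it to
\[
T \;=\; E - 2n - 3.
\]
Since trivially $E\ge E_b=3n+6$, we conclude $T\ge n+3$, which is the desired bound.

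The step I expect to require the most care is the topological bookkeeping rather than the algebra: one has to confirm that the boundary faces of the tetrahedra really do assemble into a bona fide cell decomposition of the closed genus-$2$ surface in which every edge borders exactly two triangles (so that $2E_b=3F_b$ is legitimate), and that the solid $2$-toroid is indeed a genus-$2$ handlebody with $\chi=-1$. Once these identifications are in place the inequality is immediate, and moreover the same calculation generalises verbatim to arbitrary $p$-toroids, reproducing the general bound $T_{\min}\ge n+3(p-1)$ announced in the introduction.
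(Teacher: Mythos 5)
Your computation is correct, and it takes a genuinely different route from the paper: here Theorem \ref{th:5} is not proved at all but quoted from \cite{S17}, and the paper's new general bound (Theorem \ref{th:1}) is then obtained by induction on the genus, cutting an edge or a node out of the $m$-graph of a piecewise convex division and falling back on the 1- and 2-toroid cases. Your Euler-characteristic argument bypasses that machinery entirely: assuming the 3-triangulation is face-to-face (a simplicial decomposition), which is what legitimizes $2E_b=3F_b$, $4T=2F_i+F_b$ and $V-E+F-T=\chi(P)$, and using that all $n$ vertices lie on the boundary surface, you get $T=E-2n-3\geq E_b-2n-3=n+3$; moreover, as you note, replacing $\chi(\partial P)=-2$, $\chi(P)=-1$ by $2-2p$ and $1-p$ gives $T= E-2n+3-3p\geq n+3(p-1)$ for every $p$ in one stroke, with no induction, no base cases from \cite{S15}, \cite{S17}, and no graphs of connection; it even shows that equality forces $E=E_b$, i.e.\ a triangulation without interior edges, which matches the extremal examples built from Cs\'asz\'ar polyhedra. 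One small repair: the solid region bounded by a genus-2 surface in $\mathbb{R}^3$ need not be a handlebody (its handles may be knotted), but this does not affect your conclusion, since for any compact 3-manifold $M$ with boundary one has $\chi(M)=\tfrac{1}{2}\chi(\partial M)=-1$, which is all you actually use. What the paper's route buys is a combinatorial, structural picture (convex pieces, $m$-divisions, $m$-graphs) that is reused for the constructions in Theorem \ref{th:2}; what yours buys is a shorter, uniform proof of the lower bound for all $p$ at once, at the price of invoking these facts from manifold topology and of making the face-to-face hypothesis on the triangulation explicit.
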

 
\begin{theorem} \label{th:6} For each $n \geq 10$, there is a 2-toroid which is possible to 3-triangulate. 
\end{theorem}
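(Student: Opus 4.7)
The plan is constructive, leveraging Remark \ref{r:1}: every piecewise convex 2-toroid is 3-triangulable. Hence it suffices to exhibit, for each $n \geq 10$, a piecewise convex 2-toroid with exactly $n$ vertices. This is achieved by establishing a base case $n = 10$ and then an \emph{external stacking} induction $n \rightsquigarrow n+1$.

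For the inductive step, let $P$ be a piecewise convex 2-toroid with $n$ vertices. Pick any boundary face $F$ of $P$ (a simple polygon with $k \geq 3$ vertices) and attach externally the pyramid $\Pi$ over $F$, taking as apex a new vertex $v$ placed slightly outside $P$ (close enough that $v$ sees all of $F$ from outside, so that $\Pi$ is convex and lies outside $P$). The union $P' = P \cup \Pi$ is a polyhedron: $F$ becomes an internal contact face between $P$ and $\Pi$, the $k$ lateral faces of $\Pi$ become new boundary triangles, and $v$ is the single new vertex. The Euler check $\Delta V - \Delta E + \Delta F = 1 - k + (k - 1) = 0$ confirms that the genus is unchanged, so $P'$ is still a 2-toroid, with exactly $n + 1$ vertices. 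It is piecewise convex, since we have adjoined the convex piece $\Pi$ to the piecewise convex decomposition of $P$. By Remark \ref{r:1}, $P'$ is 3-triangulable, and iterating this operation from the base case yields a 3-triangulable 2-toroid for every $n > 10$.

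For the base case $n = 10$, I would construct an explicit piecewise convex 2-toroid, modeled on the two $m$-graph patterns discussed after Figure \ref{fig:5}: either two cyclically piecewise convex 1-toroid cycles sharing a common convex piece (so that their vertex sets overlap maximally, as in the first graph of Figure \ref{fig:5}), or a contraction of the $P_{14}^2$ construction (Figure \ref{fig:4}) obtained by identifying suitable pairs of triangular contact faces. In either case the target is a genus-2 polyhedron built from prism-type pieces $A$ with exactly ten vertices. This base case is the main obstacle of the proof: one must simultaneously realize genus exactly $2$ (no unintended identifications collapsing a handle), embed the pieces in $\mathbb{R}^3$ without self-intersection, and meet the tight vertex lower bound $n = 10$ from Theorem \ref{th:5}. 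Once a valid 10-vertex example is in place, the stacking step extends it uniformly to every $n \geq 10$.
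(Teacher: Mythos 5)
Your inductive (stacking) step is sound and is essentially the same device the paper uses elsewhere: in the proof of Theorem \ref{th:2} the author raises the vertex count by gluing a simple polyhedron (a pyramid) along a triangular face, which is your one-vertex-at-a-time version done in a single step; just note that for the attached pyramid $\Pi$ to be a \emph{convex} piece you should attach over a convex (e.g.\ triangular) face, which is easy to guarantee and is only a cosmetic fix. The genuine gap is the base case $n=10$, which you yourself flag as ``the main obstacle'' but do not carry out. What you sketch -- two prism-built cycles sharing a convex piece, or a ``contraction'' of $P_{14}^2$ (Figures \ref{fig:4}, \ref{fig:5}) obtained by identifying pairs of triangular contact faces -- is not substantiated and is unlikely to reach ten vertices: the prism pieces $A$ have six vertices each, two convex pieces with disjoint interiors can share at most one contact face so each handle needs a cycle of at least three pieces, and no accounting is given showing the identifications leave an embedded genus-2 polyhedron rather than collapsing a handle or forcing self-intersection. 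Ten is exactly the Jungerman--Ringel minimum for genus 2, so the $n=10$ example is the entire content of the theorem at the boundary, and it cannot be waved at.

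The construction the author actually relies on (from the cited paper \cite{S17}, and recalled in Section \ref{sec:4}) is quite different from your sketch: the double-Cs\'asz\'ar 2-toroid $\hat{P}^2_{10}$, i.e.\ two 7-vertex Cs\'asz\'ar polyhedra sharing a common tetrahedron, giving $7+7-4=10$ vertices and $T_{min}=13$; for $n=11$ one has $\bar{P}^2_{11}$, two Cs\'asz\'ar toroids sharing a common triangular face, and larger $n$ are reached by gluing pyramids as above. In other words, the base of the induction is built from Cs\'asz\'ar polyhedra (whose 3-triangulability with 7 tetrahedra is known), not from prism pieces, and verifying that the glued object is a genuine, embeddable, 3-triangulable 2-toroid is precisely the nontrivial work -- the present paper even remarks that the geometric realization of $\hat{P}^2_{10}$ ``seems likely'' rather than trivial. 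Until you supply a concrete 10-vertex (or at least 11-vertex, settling the theorem only for $n\geq 11$) piecewise convex 2-toroid with a verified embedding, the proof is incomplete.
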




\section{3-triangulations of $p$-toroids} \label{sec:4}

In this section we shall discuss what is the minimal number of tetrahedra necessary for 3-triangulation of 3-triangulable $p$-toroid with $n$ vertices. First, we shall prove the next statement.


\begin{theorem} \label{th:1} If it is possible to 3-triangulate $p$-toroid with $n$ vertices, then the minimal number of tetrahedra necessary for its 3-triangulation is $T_{min} \geq n + 3(p-1)$.
\end{theorem}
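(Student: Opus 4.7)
My plan is to derive the inequality from two applications of Euler's formula combined with the standard triangle--tetrahedron and triangle--edge incidences. Since a 3-triangulation is not permitted to introduce new vertices, every one of the $n$ vertices lies on the boundary $p$-toroid, and the underlying space of the simplicial 3-complex is the closed solid that it bounds, which is a handlebody of genus $p$.

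Fix a minimal 3-triangulation with $T$ tetrahedra; write $F = F_i + F_b$ for the number of triangular faces (interior vs.\ boundary) and $E = E_i + E_b$ for the edges. First I would run Euler's formula on the boundary surface: using $V_b=n$, the surface Euler characteristic $\chi = 2-2p$, and the incidence $3F_b = 2E_b$, one obtains $F_b = 2(n+2p-2)$ and $E_b = 3(n+2p-2)$. Then the face--tetrahedron incidence $4T = 2F_i + F_b$ yields $F_i = 2T - n - 2p + 2$.

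Next I would apply Euler's formula to the whole 3-complex. Its underlying space is a genus-$p$ handlebody, so $\chi = 1-p$ and $n - E + F_i + F_b - T = 1-p$. Substituting the values of $F_i$ and $F_b$ and simplifying gives $E = 2n + T + 3p - 3$. Subtracting $E_b$ produces the number of interior edges,
\[
E_i \;=\; E - E_b \;=\; T - n - 3(p-1),
\]
and the trivial bound $E_i \geq 0$ rearranges to $T \geq n + 3(p-1)$, which is the claim.

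The only non-routine step is the topological input $\chi = 1-p$ for the solid $p$-toroid, which follows at once from its definition as a ball with $p$ handles attached; I regard that as the main hurdle only in the sense of needing to state it cleanly, since the rest is bookkeeping. Equality $T_{\min} = n + 3(p-1)$ corresponds exactly to $E_i = 0$, i.e.\ to triangulations in which every edge lies on the boundary, consistent with the tight cases $p=1,2$ recorded in Theorems \ref{th:3} and \ref{th:5}.
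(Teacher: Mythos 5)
Your proposal is correct, but it proves the theorem by a genuinely different route than the paper. You give a direct counting argument: Euler's formula on the triangulated boundary surface ($\chi = 2-2p$, $3F_b = 2E_b$), the incidence $4T = 2F_i + F_b$, and Euler's formula for the triangulated solid combine to give the exact identity $E_i = T - n - 3(p-1)$, whence $T \geq n + 3(p-1)$ from $E_i \geq 0$; as a bonus this characterizes the extremal triangulations as those with no interior edges. The paper instead argues by induction on the genus using its machinery of piecewise convex decompositions and $m$-graphs: it takes the cases $p=1,2$ (Theorems \ref{th:3} and \ref{th:5} from earlier papers) as the base, and in the inductive step cuts the $m$-graph of a $(k+1)$-toroid along an edge (or removes a node) lying on a single cycle, reducing to a $k$-toroid plus, in the second proof, a simple polyhedron, and adds up the minimal triangulations of the pieces. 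Your approach is self-contained (it does not need the $p=1,2$ results, and in fact reproves them, as well as the classical $T \geq n-3$ for $p=0$), avoids the somewhat informal graph-surgery and the accounting assumptions built into the definition of $m$-division, and yields an equality criterion; the paper's approach stays within its combinatorial framework of graphs of connection, which it also needs for the constructions in Theorem \ref{th:2}. One justification in your write-up should be repaired: the solid bounded by a $p$-toroid need not be a handlebody of genus $p$ (the bounded complementary region of an embedded genus-$p$ surface can be knotted), but the only fact you use, $\chi = 1-p$, still holds because any compact $3$-manifold $M$ satisfies $\chi(M) = \tfrac{1}{2}\chi(\partial M)$; you should also state explicitly that the 3-triangulation is face-to-face (a simplicial complex), since the incidence counts rely on it.
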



There are more possibilities to connect pieces in $m$-graph of $p$-toroid, but in the proof of this theorem it is not necessary to consider them. Here, we shall prove Theorem \ref{th:1} in two different ways using the mathematical induction in both of them.

Note that Theorems \ref{th:3} and \ref{th:5} guarantee that statement is true for $p = 1,2$. Let us suppose (of the following proofs) that statement is true for $p = k$, ($k \in N$) i.e. 


\vspace{2mm}
\begin{minipage}[c]{12cm} {\it If it is possible to 3-triangulate $k$-toroid ($k \in N$) with $n$ vertices, then the minimal number of tetrahedra necessary to its 3-triangulation is $T_{min} \geq n + 3(k-1)$.}
\end{minipage}

\begin{proof} {\it 1.} Observe in $m$-graph $G$ of $(k+1)$-toroid $P$ an edge $e$ which belongs to only one cycle. Such an edge exists because all $m$-graphs are planar. Let us form a new graph $\bar{G}$ with $k$ cycles by excluding $e$ from $G$.
From Definition \ref{d:m} of $m$-graph, it holds that graph $\bar{G}$ is also $m$-graph. Then, appropriate polyhedron $\bar{P}$ is $k$ toroid with $\bar{n}$ vertices. It is obtained from $P$ by "separating" convex pieces and by "duplicating" contact face with $t$ vertices ($t \geq 3$) appropriate to edge $e$ in graph $G$. So, for the number of the vertices of $\bar{P}$ is true 
$$\bar{n} = n + t.$$
Since $\bar{P}$ has $k$ cycles, for its minimal triangulation by induction hypothesis holds
$$T_{min}(\bar{P}) \geq \bar{n} + 3(k-1) = n + t + 3(k-1).$$
Observe that $T_{min}(P) = T_{min}(\bar{P})$. That means 
$$T_{min}(P) \geq n + t + 3(k-1) \geq n + 3\left((k+1) - 1 \right),$$
thus the statement is true for $p = k+1$.
\end{proof}


Note that in this proof the request is to deform a little bit $P$ in order to "separate" convex pieces. In the next proof such deformation is not necessary.

\begin{proof} {\it 2.} In $m$-graph $G$ of $(k+1)$-toroid $P$ observe node $d$ belonging to only one cycle $c$. Let us form a new graph $\hat{G}$ by excluding node $d$, both edges of $G$ containing $d$ which belong to $c$, and also all trees of $G$ containing $d$, if they exist. Such a graph $\hat{G}$ is connected, has $k$ cycles, and it is $m$-graph of $k$-toroid $\hat{P}$ with $n_1$ vertices. Appropriate polyhedron $\hat{P}$ is part of $(k+1)$-toroid $P$, and the rest of $P$ is 0-toroid $\hat{S}$ (i.e. simple polyhedron) with $n_2$ vertices. The pieces $\hat{P}$ and $\hat{S}$ are connected with two contact faces with $t_1$ and $t_2$ ($t_1, t_2 \geq 3$) vertices resp., so
$$n = n_1 + n_2 - (t_1 + t_2).$$
Using induction hypothesis we obtain
\begin{eqnarray*}
T_{min}(P) & = & T_{min}(\hat{P}) + T_{min}(\hat{S}) \geq \\
		& \geq & (n_1 + 3(k-1)) + (n_2-3) = \\
		& = & n_1 + n_2 +3(k-2) = \\
		& = & n + t_1 + t_2 + 3(k-2) \geq \\
		& \geq & n + 3\left((k+1) - 1 \right).
\end{eqnarray*}
\end{proof}

Considering the smallest number $n$ of vertices in 3-triangulable $p$-toroid it will be necessary to take care about different possibilities of connecting pieces in $m$-graph. Note that some polyhedron might be topologically (combinatorially) realizable but not also geometrically. That is reason to create more examples of topologically realizable $p$-toroids. Checking if their geometric realizations exist will be left for some future paper. For the series of $p$-toroids described in the proof of the following theorem it is obvious that it is geometrically realizable.


\begin{theorem} \label{th:2} For each $n \geq 4p + 3$ there is 3-triangulable $p$-toroid with $T_{min} = n + 3(p-1)$.
\end{theorem}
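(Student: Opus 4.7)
The plan is to construct, for each $n \geq 4p+3$, an explicit 3-triangulable $p$-toroid meeting the lower bound of Theorem \ref{th:1}. The strategy is to glue $p-1$ copies of the Cs\'asz\'ar polyhedron to a carefully chosen base 1-toroid along triangular contact faces, and to exploit $m$-divisions to prove additivity of $T_{min}$ across these gluings.

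Given $n \geq 4p+3$, set $n_0 := n - 4(p-1) \geq 7$, and let $P_0$ be a 3-triangulable 1-toroid on $n_0$ vertices with $T_{min}(P_0) = n_0$, as provided by the construction behind Theorem \ref{th:4} in \cite{S15} (which attains the bound of Theorem \ref{th:3} for every $n_0 \geq 7$ and can be arranged to possess an exposed triangular face). Pick a triangular face $f_0$ of $P_0$ whose supporting plane has the rest of $P_0$ strictly on one side. On the other side, affix a suitably shrunken copy $C_1$ of the Cs\'asz\'ar polyhedron, identifying one of its triangular faces with $f_0$. The resulting polyhedron is topologically a $2$-toroid (its boundary is the connected sum of $\partial P_0$ with a torus), has $n_0+4$ vertices, and carries an $m$-division into $P_0$ and $C_1$ whose single contact face is the triangle $f_0$. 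Iterate this attachment $p-1$ times, each time along a fresh exposed triangular face of the current polyhedron.

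After $p-1$ attachments we obtain a $p$-toroid $P$ with $n_0 + 4(p-1) = n$ vertices and an $m$-division into $P_0, C_1, \ldots, C_{p-1}$. The additivity of $T_{min}$ across the division uses that every contact face is a triangle: the discussion preceding Definition \ref{d:m} shows the bipyramid shortcut requires a contact face with $t \geq 5$ vertices, and hence cannot occur here. Therefore
\begin{equation*}
T_{min}(P) \;=\; T_{min}(P_0) \;+\; \sum_{i=1}^{p-1} T_{min}(C_i) \;=\; n_0 + 7(p-1) \;=\; n + 3(p-1),
\end{equation*}
matching the lower bound of Theorem \ref{th:1}. Since $n_0$ ranges over all integers $\geq 7$, the value $n = n_0 + 4(p-1)$ covers every integer $\geq 4p+3$, as required.

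The main obstacle is the geometric realizability of each successive attachment: as the paragraph following Theorem \ref{th:1} emphasizes, topological realizability does not automatically give geometric realizability. This is handled by scaling each $C_i$ to fit in a small open neighbourhood on one side of the plane of its chosen triangular face; since the Cs\'asz\'ar polyhedron admits explicit geometric realizations (\cite{BE}, \cite{B-1}, \cite{Cs}, \cite{Sz-2}) and any triangle in space can be mapped to any prescribed triangle by an affine transformation, a sufficiently small scaled copy can always be placed disjointly from the previously built polyhedron. The routine check that $T_{min}(C_i) = 7$ follows from Theorem \ref{th:3} with $n=7$ combined with the explicit $7$-tetrahedron triangulation exhibited in \cite{Sz-2}.
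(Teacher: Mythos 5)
Your construction is correct in substance and meets the bound, but it is organized differently from the paper's. The paper builds a single explicit family: a chain $\bar{P}^p_{4p+3}$ of $p$ Cs\'asz\'ar polyhedra glued consecutively along triangular contact faces (so $n=7p-3(p-1)=4p+3$, $T_{min}=7p$), and then absorbs any surplus vertices by gluing one simple polyhedron $S_k$ (a pyramid over a spatial $(k-1)$-gon, $k=n-4p$, $T_{min}(S_k)=k-3$) onto a triangular face of the chain; the count $7p+(k-3)=n+3(p-1)$ then matches the lower bound of Theorem \ref{th:1}. You instead keep the surplus vertices inside a base 1-toroid $P_0$ with $T_{min}(P_0)=n_0=n-4(p-1)$, imported from \cite{S15}, and raise the genus by attaching $p-1$ Cs\'asz\'ar copies along exposed triangles. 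What your route buys is a reduction of the genus-$p$ case to the genus-$1$ case plus a uniform ``attach a handle along a triangle'' step; what it costs is a heavier reliance on the reference: Theorem \ref{th:4} as quoted here only asserts existence of 3-triangulable 1-toroids, not that they attain $T_{min}=n_0$, nor that they possess a triangular face whose plane supports the whole polyhedron, so both properties of $P_0$ are assumptions about the construction in \cite{S15} rather than facts established in this paper (the paper sidesteps this by using only the Cs\'asz\'ar polyhedron and a pyramid). Two smaller points: your ``additivity'' step does not really follow from the remark that the bipyramid phenomenon needs a contact face with $t\geq 5$ vertices (that discussion is an example, not a characterization); the clean argument, which you in effect already have, is that gluing the minimal triangulations of the pieces gives a triangulation of $P$ with $n_0+7(p-1)=n+3(p-1)$ tetrahedra, and Theorem \ref{th:1} supplies the matching lower bound, so equality is forced. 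Finally, your treatment of geometric realizability (affinely fitting each Cs\'asz\'ar copy, compressed toward the plane of the chosen face, into the free half-space) is at least as explicit as the paper's, which simply declares its series obviously realizable.
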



\begin{proof} First we shall form main series of $p$-toroids $\bar{P}^p_{4p+3}$  by gluing $p$ Cs\' {a}sz\' {a}r's  toroids into chain. Each pair of neighbour 1-toroids have a common contact face. $m$-graphs of these $p$-toroids are formed of $p$ heptagons connected by $p-1$ edges, as it is shown in the Figure \ref{fig:8}.

\begin{figure}[htbp]
\centering
	\includegraphics[width=0.8\textwidth]{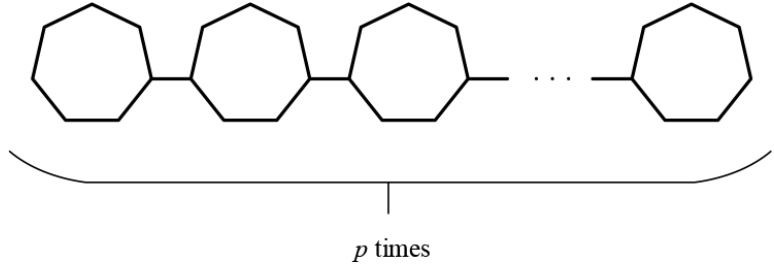}
	\caption{Graph of connection for $p$-toroid $\bar{P}^p_{4p+3}$}
	\label{fig:8} 
\end{figure}

In $p$-toroids $\bar{P}^p_{4p+3}$ neighbour 1-toroids have 3 common vertices, so the total number of vertices of $\bar{P}^p_{4p+3}$ is $n = 4p + 3$. On the other hand, the number of tetrahedra in the 3-triangulation of $\bar{P}^p_{4p+3}$ is equal to $7p$, i.e. $T_{min} (\bar{P}^p_{4p+3}) = 7p$. Since for $n = 4p +3$ holds
$$n + 3(p-1) = (4p +3) + 3(p-1) = 7p = T_{min} (\bar{P}^p_{4p+3}),$$
claim is true whenever $n = 4p +3$.

If $n > 4p + 3$ we can take any simple polyhedron $S_k$ with $k = n - (4p + 3) + 3 = n - 4p$ vertices, which has triangular faces and $T_{min} (S) = k - 3$, e.g. a pyramid with space $(k-1)$-gon in the basis. Then $p$-toroid $\bar{P}^p_n$ can be formed by gluing $p$-toroid $\bar{P}^p_{4p+3}$ and $S_k$ so that they have a common triangular face. Then, the number of vertices of $\bar{P}^p_n$ is
$$(4p+3) + (n - 4p) - 3 = n,$$
and 
\begin{eqnarray*}
   T_{min} (\bar{P}^p_n) & = & T_{min} (\bar{P}^p_{4p+3}) + T_{min} (S_k) \\
   											 & = & 7p + k - 3 \\
   											 & = & 7p + n - 4p - 3 \\
   											 & = & n + 3p - 3 \\
   											 & = & n + 3(p-1).
\end{eqnarray*}
\end{proof}

A smaller number of vertices in $p$-toroid appears if in main series of $p$ Cs\' {a}sz\' {a}r's  toroids, neighbour ones have a common tetrahedron instead of a common face (Figure \ref{fig:9}). Then number $n$ of vertices in such $\hat{P}^p_{3p+4}$ is 
$n = 3p+4$, while $T_{min} (\hat{P}^p_{3p+4}) = 6p + 1$. Since $6p + 1 = (3p+4) + (3p - 3) = n + 3(p-1)$, holds
$$T_{min} (\hat{P}^p_{3p+4}) = n + 3(p-1).$$

\begin{figure}[htbp]
\centering
	\includegraphics[width=0.62\textwidth]{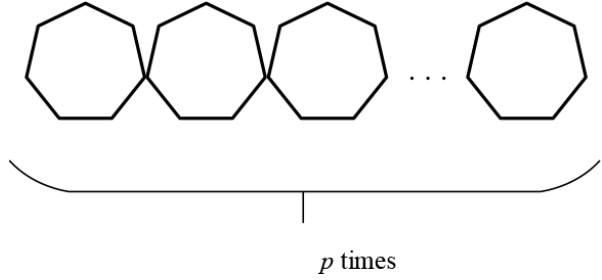}
	\caption{Graph of connection for $p$-toroid $\hat{P}^p_{3p+4}$}
	\label{fig:9} 
\end{figure}

But, for $\hat{P}^p_{3p+4}$ it is questionable if it has geometric realization. In \cite{S17} double-Cs\' {a}sz\' {a}r 2-toroid, which is $\hat{P}^2_{10}$ from this series was introduced. It was proved that it is 3-triangulable 2-toroid with the smallest number of vertices, $n = 10$. It seems likely that this 2-toroid has geometric realization. Next toroid in this series is $\hat{P}^3_{13}$, composed of three Cs\' {a}sz\' {a}r's 1-toroids. Since all three Cs\' {a}sz\' {a}r's 1-toroids which build $\hat{P}^3_{13}$ have at least one common vertex, geometric realization of $\hat{P}^3_{13}$ is not so obvious. 

On the other hand, if this chain is geometrically realizable, we can think if it is possible for some enough great $p$ to close new circle to form $\tilde{P}^{p+1}_{3p}$ (Figure \ref{fig:10}). Then this `cycle of cycles' would be $(p+1)^{\mathrm {th}}$ cycle, the number of vertices would be $n = 3p$, and again
\begin{eqnarray*}
   T_{min} (\tilde{P}^{p+1}_{3p}) & = & T_{min} (\hat{P}^p_{3p+4}) - 1 = \\
    															& = & 6p \\
    															& = & n + 3\left((p+1) - 1 \right).
\end{eqnarray*}

\begin{figure}[htbp]
\centering
	\includegraphics[width=0.44\textwidth]{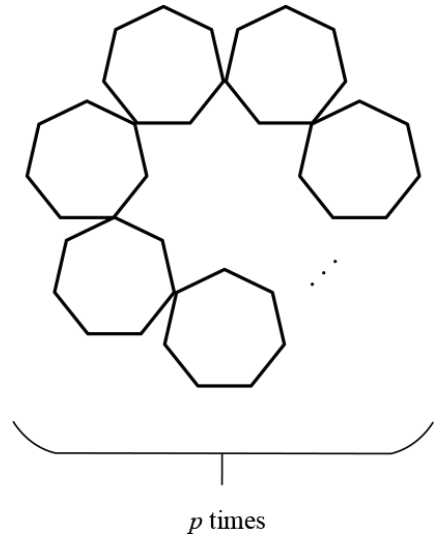}
	\caption{Graph of connection for $(p+1)$-toroid $\tilde{P}^{p+1}_{3p}$}
	\label{fig:10} 
\end{figure}

Of course, it is also possible to think of closing cycle of Cs\' {a}sz\' {a}r's 1-toroids in a chain of type $\bar{P}^p_{4p+3}$. Such $(p+1)$-toroid would have $n = 4p$ vertices and $T_{min}$ would be equal to $7p$. Again it would be $7p = n + 3\left((p+1) - 1 \right)$.

Note that the smallest possible number of vertices for $p$-toroid is considered in \cite{JR}, in a combinatorial way. It is proved for example, that the minimal number of vertices for 2-toroids and 3-toroids is $n = 10$. Geometrical realization is not considered in this paper. Also, it is not known if mentioned toroids are 3-triangulable. In \cite{Br} is proved that 3-toroid with $n = 10$ vertices have geometrical realization, but it remains to investigate its 3-triangulability.





\end{document}